\documentclass[12pt, a4paper, reqno]{amsart}
\usepackage{a4wide}
\usepackage{enumerate}

\newcommand{\shin}[1]{}
\newcommand{\seo}[1]{}



\usepackage{color}
\usepackage{ifpdf}
\ifpdf 
  \usepackage[pdftex]{graphicx}
  \DeclareGraphicsExtensions{.pdf,.png,.mps}
  \usepackage{pgf}
  \usepackage{tikz}
\else 
  \usepackage{graphicx}
  \DeclareGraphicsExtensions{.eps,.bmp}
  \DeclareGraphicsRule{.emf}{bmp}{}{}
  \DeclareGraphicsRule{.png}{bmp}{}{}
  \usepackage{pgf}
  \usepackage{tikz}
  \usepackage{pstricks}
\fi
\usepackage{epic,bez123}
\usepackage{floatflt}
\usepackage{wrapfig}

\usepackage{amsthm}

\theoremstyle{plain}
\newtheorem{thm}{Theorem}

\newtheorem{lem}[thm]{Lemma}

\theoremstyle{definition}

\theoremstyle{remark}

\usepackage{amsmath}
\DeclareMathOperator{\MD}{MD}

\newcommand{\abs}[1]{\left|#1\right|}
\newcommand{\set}[1]{\left\{#1\right\}}

\newcommand{\Ord}{\mathcal{O}}
\newcommand{\Z}{\mathcal{Z}}
\newcommand{\F}{\mathcal{F}}

\title{A refinement for ordered labeled trees}
\author{Seunghyun Seo}
\address[Seunghyun Seo]{Department of Mathematics Education, Kangwon National University, Chuncheon 200-701, Korea}
\email{shyunseo@kangwon.ac.kr}

\author{Heesung Shin$^\dag$}
\address[Heesung Shin]{Department of Mathematics, Inha University, Incheon 402-751, Korea}
\email{shin@inha.ac.kr}
\thanks{\dag Corresponding author}

\keywords{Ordered labeled tree, Refinement, Maximal decreasing subtree}
\subjclass[2000]{05C30}

\date{\today}

\begin{document}
\maketitle
\begin{abstract}
Let $\Ord_n$ be the set of ordered labeled trees on $\{0,\dots,n\}$. A maximal decreasing subtree of an ordered labeled tree
is defined by the maximal ordered subtree from the root with all edges being decreasing. In this paper, we study a new refinement $\Ord_{n,k}$ of $\Ord_n$, which is the set of ordered labeled trees whose maximal decreasing subtree has $k+1$ vertices.
\end{abstract}

\section{Introduction}
An \emph{ordered tree} is a rooted tree in which children of each vertex are ordered. Figure~\ref{fig:ord} shows all
the ordered tree with $4$ vertices. It is well known (see \cite[Exercise 6.19]{Sta99}) that the number of ordered trees with $n+1$ vertices is given by the $n$th Catalan number $C_n = \frac{1}{n+1} \binom{2n}{n}$.

\begin{figure}[h]
\centering
\ifpdf
\begin{pgfpicture}{7.00mm}{7.00mm}{123.00mm}{43.00mm}
\pgfsetxvec{\pgfpoint{1.00mm}{0mm}}
\pgfsetyvec{\pgfpoint{0mm}{1.00mm}}
\color[rgb]{0,0,0}\pgfsetlinewidth{0.30mm}\pgfsetdash{}{0mm}
\pgfcircle[fill]{\pgfxy(10.00,10.00)}{1.00mm}
\pgfcircle[stroke]{\pgfxy(10.00,10.00)}{1.00mm}
\pgfcircle[fill]{\pgfxy(10.00,20.00)}{1.00mm}
\pgfcircle[stroke]{\pgfxy(10.00,20.00)}{1.00mm}
\pgfcircle[fill]{\pgfxy(10.00,30.00)}{1.00mm}
\pgfcircle[stroke]{\pgfxy(10.00,30.00)}{1.00mm}
\pgfcircle[fill]{\pgfxy(10.00,40.00)}{1.00mm}
\pgfcircle[stroke]{\pgfxy(10.00,40.00)}{1.00mm}
\pgfcircle[fill]{\pgfxy(40.00,30.00)}{1.00mm}
\pgfcircle[stroke]{\pgfxy(40.00,30.00)}{1.00mm}
\pgfcircle[fill]{\pgfxy(30.00,30.00)}{1.00mm}
\pgfcircle[stroke]{\pgfxy(30.00,30.00)}{1.00mm}
\pgfcircle[fill]{\pgfxy(30.00,20.00)}{1.00mm}
\pgfcircle[stroke]{\pgfxy(30.00,20.00)}{1.00mm}
\pgfcircle[fill]{\pgfxy(50.00,30.00)}{1.00mm}
\pgfcircle[stroke]{\pgfxy(50.00,30.00)}{1.00mm}
\pgfcircle[fill]{\pgfxy(35.00,40.00)}{1.00mm}
\pgfcircle[stroke]{\pgfxy(35.00,40.00)}{1.00mm}
\pgfcircle[fill]{\pgfxy(55.00,40.00)}{1.00mm}
\pgfcircle[stroke]{\pgfxy(55.00,40.00)}{1.00mm}
\pgfcircle[fill]{\pgfxy(60.00,30.00)}{1.00mm}
\pgfcircle[stroke]{\pgfxy(60.00,30.00)}{1.00mm}
\pgfcircle[fill]{\pgfxy(60.00,20.00)}{1.00mm}
\pgfcircle[stroke]{\pgfxy(60.00,20.00)}{1.00mm}
\pgfcircle[fill]{\pgfxy(80.00,40.00)}{1.00mm}
\pgfcircle[stroke]{\pgfxy(80.00,40.00)}{1.00mm}
\pgfcircle[fill]{\pgfxy(80.00,30.00)}{1.00mm}
\pgfcircle[stroke]{\pgfxy(80.00,30.00)}{1.00mm}
\pgfcircle[fill]{\pgfxy(75.00,20.00)}{1.00mm}
\pgfcircle[stroke]{\pgfxy(75.00,20.00)}{1.00mm}
\pgfcircle[fill]{\pgfxy(85.00,20.00)}{1.00mm}
\pgfcircle[stroke]{\pgfxy(85.00,20.00)}{1.00mm}
\pgfcircle[fill]{\pgfxy(110.00,40.00)}{1.00mm}
\pgfcircle[stroke]{\pgfxy(110.00,40.00)}{1.00mm}
\pgfcircle[fill]{\pgfxy(110.00,30.00)}{1.00mm}
\pgfcircle[stroke]{\pgfxy(110.00,30.00)}{1.00mm}
\pgfcircle[fill]{\pgfxy(100.00,30.00)}{1.00mm}
\pgfcircle[stroke]{\pgfxy(100.00,30.00)}{1.00mm}
\pgfcircle[fill]{\pgfxy(120.00,30.00)}{1.00mm}
\pgfcircle[stroke]{\pgfxy(120.00,30.00)}{1.00mm}
\pgfmoveto{\pgfxy(10.00,40.00)}\pgflineto{\pgfxy(10.00,30.00)}\pgfstroke
\pgfmoveto{\pgfxy(10.00,30.00)}\pgflineto{\pgfxy(10.00,20.00)}\pgfstroke
\pgfmoveto{\pgfxy(10.00,20.00)}\pgflineto{\pgfxy(10.00,10.00)}\pgfstroke
\pgfmoveto{\pgfxy(35.00,40.00)}\pgflineto{\pgfxy(30.00,30.00)}\pgfstroke
\pgfmoveto{\pgfxy(30.00,30.00)}\pgflineto{\pgfxy(30.00,20.00)}\pgfstroke
\pgfmoveto{\pgfxy(35.00,40.00)}\pgflineto{\pgfxy(40.00,30.00)}\pgfstroke
\pgfmoveto{\pgfxy(55.00,40.00)}\pgflineto{\pgfxy(50.00,30.00)}\pgfstroke
\pgfmoveto{\pgfxy(55.00,40.00)}\pgflineto{\pgfxy(60.00,30.00)}\pgfstroke
\pgfmoveto{\pgfxy(60.00,30.00)}\pgflineto{\pgfxy(60.00,20.00)}\pgfstroke
\pgfmoveto{\pgfxy(80.00,40.00)}\pgflineto{\pgfxy(80.00,30.00)}\pgfstroke
\pgfmoveto{\pgfxy(80.00,30.00)}\pgflineto{\pgfxy(75.00,20.00)}\pgfstroke
\pgfmoveto{\pgfxy(80.00,30.00)}\pgflineto{\pgfxy(85.00,20.00)}\pgfstroke
\pgfmoveto{\pgfxy(110.00,40.00)}\pgflineto{\pgfxy(100.00,30.00)}\pgfstroke
\pgfmoveto{\pgfxy(110.00,40.00)}\pgflineto{\pgfxy(110.00,30.00)}\pgfstroke
\pgfmoveto{\pgfxy(110.00,40.00)}\pgflineto{\pgfxy(120.00,30.00)}\pgfstroke
\end{pgfpicture}%
\else
  \setlength{\unitlength}{1bp}%
  \begin{picture}(328.82, 102.05)(0,0)
  \put(0,0){\includegraphics{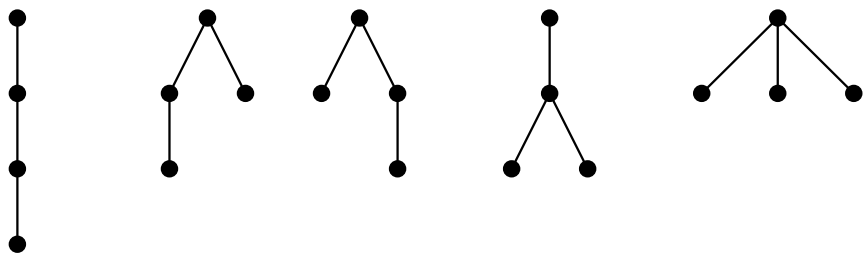}}
  \end{picture}%
\fi
\caption{All ordered trees with $4$ vertices}
\label{fig:ord}
\end{figure}

An \emph{ordered labeled tree} is an ordered tree whose vertices are labeled by distinct nonnegative integers. In most cases, an ordered labeled tree with $n+1$ vertices is identified with an ordered tree on the vertex set $[0,n]:=\{0,\dots,n\}$.
Let $\Ord_n$ be the set of ordered labeled trees on $[0,n]$. Clearly the cardinality of $\Ord_n$ is given by
\begin{equation} \label{eq:cat}
|\Ord_n| = (n+1)!\, C_n = (n+1)^{(n)},
\end{equation}
where $m^{(k)}:=m(m+1)\cdots(m+k-1)$ is a rising factorial.

For a given ordered labeled tree $T$, a \emph{maximal decreasing subtree} of $T$ is defined by the maximal ordered subtree from the root with all edges being decreasing, denoted by $\MD(T)$. Figure~\ref{fig:tree} illustrates the maximal decreasing subtree of a given tree $T$. Let $\Ord_{n,k}$ be the set of ordered labeled trees on $[0,n]$ with its maximal decreasing subtree having $k$ edges.
\begin{figure}[t]
\centering
\ifpdf
\begin{pgfpicture}{17.00mm}{16.14mm}{126.00mm}{54.14mm}
\pgfsetxvec{\pgfpoint{1.00mm}{0mm}}
\pgfsetyvec{\pgfpoint{0mm}{1.00mm}}
\color[rgb]{0,0,0}\pgfsetlinewidth{0.30mm}\pgfsetdash{}{0mm}
\pgfcircle[fill]{\pgfxy(40.00,50.00)}{1.00mm}
\pgfcircle[stroke]{\pgfxy(40.00,50.00)}{1.00mm}
\pgfcircle[fill]{\pgfxy(30.00,40.00)}{1.00mm}
\pgfcircle[stroke]{\pgfxy(30.00,40.00)}{1.00mm}
\pgfcircle[fill]{\pgfxy(40.00,40.00)}{1.00mm}
\pgfcircle[stroke]{\pgfxy(40.00,40.00)}{1.00mm}
\pgfcircle[fill]{\pgfxy(50.00,40.00)}{1.00mm}
\pgfcircle[stroke]{\pgfxy(50.00,40.00)}{1.00mm}
\pgfcircle[fill]{\pgfxy(40.00,30.00)}{1.00mm}
\pgfcircle[stroke]{\pgfxy(40.00,30.00)}{1.00mm}
\pgfcircle[fill]{\pgfxy(60.00,30.00)}{1.00mm}
\pgfcircle[stroke]{\pgfxy(60.00,30.00)}{1.00mm}
\pgfcircle[fill]{\pgfxy(30.00,30.00)}{1.00mm}
\pgfcircle[stroke]{\pgfxy(30.00,30.00)}{1.00mm}
\pgfcircle[fill]{\pgfxy(50.00,20.00)}{1.00mm}
\pgfcircle[stroke]{\pgfxy(50.00,20.00)}{1.00mm}
\pgfcircle[fill]{\pgfxy(70.00,20.00)}{1.00mm}
\pgfcircle[stroke]{\pgfxy(70.00,20.00)}{1.00mm}
\pgfcircle[fill]{\pgfxy(40.00,20.00)}{1.00mm}
\pgfcircle[stroke]{\pgfxy(40.00,20.00)}{1.00mm}
\pgfcircle[fill]{\pgfxy(20.00,20.00)}{1.00mm}
\pgfcircle[stroke]{\pgfxy(20.00,20.00)}{1.00mm}
\pgfmoveto{\pgfxy(40.00,50.00)}\pgflineto{\pgfxy(40.00,40.00)}\pgfstroke
\pgfmoveto{\pgfxy(40.00,50.00)}\pgflineto{\pgfxy(50.00,40.00)}\pgfstroke
\pgfmoveto{\pgfxy(50.00,40.00)}\pgflineto{\pgfxy(60.00,30.00)}\pgfstroke
\pgfmoveto{\pgfxy(60.00,30.00)}\pgflineto{\pgfxy(70.00,20.00)}\pgfstroke
\pgfmoveto{\pgfxy(60.00,30.00)}\pgflineto{\pgfxy(50.00,20.00)}\pgfstroke
\pgfmoveto{\pgfxy(50.00,40.00)}\pgflineto{\pgfxy(40.00,30.00)}\pgfstroke
\pgfmoveto{\pgfxy(40.00,50.00)}\pgflineto{\pgfxy(30.00,40.00)}\pgfstroke
\pgfmoveto{\pgfxy(30.00,40.00)}\pgflineto{\pgfxy(30.00,30.00)}\pgfstroke
\pgfmoveto{\pgfxy(30.00,30.00)}\pgflineto{\pgfxy(40.00,20.00)}\pgfstroke
\pgfmoveto{\pgfxy(30.00,30.00)}\pgflineto{\pgfxy(20.00,20.00)}\pgfstroke
\pgfputat{\pgfxy(32.00,39.00)}{\pgfbox[bottom,left]{\fontsize{11.38}{13.66}\selectfont 7}}
\pgfputat{\pgfxy(22.00,19.00)}{\pgfbox[bottom,left]{\fontsize{11.38}{13.66}\selectfont 8}}
\pgfputat{\pgfxy(52.00,19.00)}{\pgfbox[bottom,left]{\fontsize{11.38}{13.66}\selectfont 0}}
\pgfputat{\pgfxy(62.00,29.00)}{\pgfbox[bottom,left]{\fontsize{11.38}{13.66}\selectfont 10}}
\pgfputat{\pgfxy(42.00,19.00)}{\pgfbox[bottom,left]{\fontsize{11.38}{13.66}\selectfont 4}}
\pgfputat{\pgfxy(72.00,19.00)}{\pgfbox[bottom,left]{\fontsize{11.38}{13.66}\selectfont 5}}
\pgfputat{\pgfxy(52.00,39.00)}{\pgfbox[bottom,left]{\fontsize{11.38}{13.66}\selectfont 6}}
\pgfputat{\pgfxy(42.00,29.00)}{\pgfbox[bottom,left]{\fontsize{11.38}{13.66}\selectfont 2}}
\pgfputat{\pgfxy(43.00,49.00)}{\pgfbox[bottom,left]{\fontsize{11.38}{13.66}\selectfont 9}}
\pgfputat{\pgfxy(32.00,29.00)}{\pgfbox[bottom,left]{\fontsize{11.38}{13.66}\selectfont 1}}
\pgfputat{\pgfxy(42.00,39.00)}{\pgfbox[bottom,left]{\fontsize{11.38}{13.66}\selectfont 3}}
\pgfputat{\pgfxy(20.00,49.00)}{\pgfbox[bottom,left]{\fontsize{11.38}{13.66}\selectfont $T$}}
\pgfcircle[fill]{\pgfxy(110.00,50.00)}{1.00mm}
\pgfcircle[stroke]{\pgfxy(110.00,50.00)}{1.00mm}
\pgfcircle[fill]{\pgfxy(100.00,40.00)}{1.00mm}
\pgfcircle[stroke]{\pgfxy(100.00,40.00)}{1.00mm}
\pgfcircle[fill]{\pgfxy(110.00,40.00)}{1.00mm}
\pgfcircle[stroke]{\pgfxy(110.00,40.00)}{1.00mm}
\pgfcircle[fill]{\pgfxy(120.00,40.00)}{1.00mm}
\pgfcircle[stroke]{\pgfxy(120.00,40.00)}{1.00mm}
\pgfcircle[fill]{\pgfxy(110.00,30.00)}{1.00mm}
\pgfcircle[stroke]{\pgfxy(110.00,30.00)}{1.00mm}
\pgfcircle[fill]{\pgfxy(100.00,30.00)}{1.00mm}
\pgfcircle[stroke]{\pgfxy(100.00,30.00)}{1.00mm}
\pgfmoveto{\pgfxy(110.00,50.00)}\pgflineto{\pgfxy(110.00,40.00)}\pgfstroke
\pgfmoveto{\pgfxy(110.00,50.00)}\pgflineto{\pgfxy(120.00,40.00)}\pgfstroke
\pgfmoveto{\pgfxy(120.00,40.00)}\pgflineto{\pgfxy(110.00,30.00)}\pgfstroke
\pgfmoveto{\pgfxy(110.00,50.00)}\pgflineto{\pgfxy(100.00,40.00)}\pgfstroke
\pgfmoveto{\pgfxy(100.00,40.00)}\pgflineto{\pgfxy(100.00,30.00)}\pgfstroke
\pgfputat{\pgfxy(102.00,39.00)}{\pgfbox[bottom,left]{\fontsize{11.38}{13.66}\selectfont 7}}
\pgfputat{\pgfxy(122.00,39.00)}{\pgfbox[bottom,left]{\fontsize{11.38}{13.66}\selectfont 6}}
\pgfputat{\pgfxy(112.00,29.00)}{\pgfbox[bottom,left]{\fontsize{11.38}{13.66}\selectfont 2}}
\pgfputat{\pgfxy(113.00,49.00)}{\pgfbox[bottom,left]{\fontsize{11.38}{13.66}\selectfont 9}}
\pgfputat{\pgfxy(102.00,29.00)}{\pgfbox[bottom,left]{\fontsize{11.38}{13.66}\selectfont 1}}
\pgfputat{\pgfxy(112.00,39.00)}{\pgfbox[bottom,left]{\fontsize{11.38}{13.66}\selectfont 3}}
\pgfputat{\pgfxy(90.00,49.00)}{\pgfbox[bottom,left]{\fontsize{11.38}{13.66}\selectfont $\MD(T)$}}
\pgfsetlinewidth{1.20mm}\pgfmoveto{\pgfxy(75.00,35.00)}\pgflineto{\pgfxy(85.00,35.00)}\pgfstroke
\pgfmoveto{\pgfxy(85.00,35.00)}\pgflineto{\pgfxy(82.20,35.70)}\pgflineto{\pgfxy(82.20,34.30)}\pgflineto{\pgfxy(85.00,35.00)}\pgfclosepath\pgffill
\pgfmoveto{\pgfxy(85.00,35.00)}\pgflineto{\pgfxy(82.20,35.70)}\pgflineto{\pgfxy(82.20,34.30)}\pgflineto{\pgfxy(85.00,35.00)}\pgfclosepath\pgfstroke
\end{pgfpicture}%
\else
  \setlength{\unitlength}{1bp}%
  \begin{picture}(308.98, 107.72)(0,0)
  \put(0,0){\includegraphics{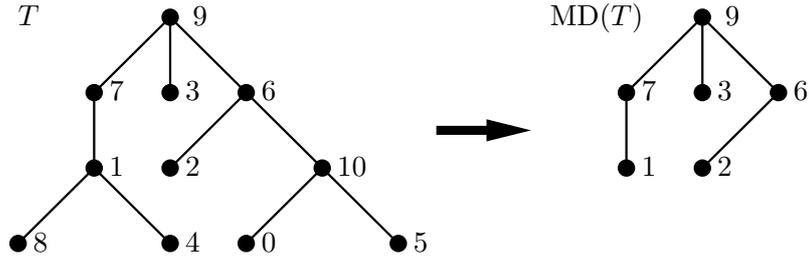}}
  \put(42.52,64.81){\fontsize{11.38}{13.66}\selectfont 7}
  \put(14.17,8.12){\fontsize{11.38}{13.66}\selectfont 8}
  \put(99.21,8.12){\fontsize{11.38}{13.66}\selectfont 0}
  \put(127.56,36.46){\fontsize{11.38}{13.66}\selectfont 10}
  \put(70.87,8.12){\fontsize{11.38}{13.66}\selectfont 4}
  \put(155.91,8.12){\fontsize{11.38}{13.66}\selectfont 5}
  \put(99.21,64.81){\fontsize{11.38}{13.66}\selectfont 6}
  \put(70.87,36.46){\fontsize{11.38}{13.66}\selectfont 2}
  \put(73.70,93.16){\fontsize{11.38}{13.66}\selectfont 9}
  \put(42.52,36.46){\fontsize{11.38}{13.66}\selectfont 1}
  \put(70.87,64.81){\fontsize{11.38}{13.66}\selectfont 3}
  \put(8.50,93.16){\fontsize{11.38}{13.66}\selectfont $T$}
  \put(240.94,64.81){\fontsize{11.38}{13.66}\selectfont 7}
  \put(297.64,64.81){\fontsize{11.38}{13.66}\selectfont 6}
  \put(269.29,36.46){\fontsize{11.38}{13.66}\selectfont 2}
  \put(272.13,93.16){\fontsize{11.38}{13.66}\selectfont 9}
  \put(240.94,36.46){\fontsize{11.38}{13.66}\selectfont 1}
  \put(269.29,64.81){\fontsize{11.38}{13.66}\selectfont 3}
  \put(206.93,93.16){\fontsize{11.38}{13.66}\selectfont $\MD(T)$}
  \end{picture}%
\fi
\caption{The maximal decreasing subtree of the ordered labeled tree $T$}
\label{fig:tree}
\end{figure}

In this paper we present a formula for $|\Ord_{n,k}|$, which makes a refined enumeration of $\Ord_n$, or a generalization of equation~\eqref{eq:cat}. Note that a similar refinement for the rooted (unordered) labeled trees was done before (see \cite{SS12}), but the ordered case is more complicated and has quite different features.

\section{Main results}

From now on we will consider labeled trees only. So we will omit the word ``labeled".
Recall that $\Ord_{n,k}$ is the set of ordered trees on $[0,n]$ with its maximal decreasing ordered subtree having $k$ edges. Let $\Z_{n,k}$ be the set of ordered trees on $[0,n]$ attached additional $(n-k)$ increasing leaves to decreasing tree with $k$ edges. Note that the set $\Z_{n,k}$ first appeared in the Ph.D.~Thesis~\cite[p.\,46]{Dra08} of Drake. Let $\F_{n,k}$ be the set of \seo{(non-ordered)} forests on $[n]:=\{1,2,\ldots,n\}$ consisting of $k$ ordered trees, where the $k$ roots are not ordered. In Figure~\ref{fig:forest}, the first two forests are the same, but the third one is a different forest in $\F_{4,2}$.

\begin{figure}[h]
\centering
\ifpdf
\begin{pgfpicture}{-3.00mm}{-3.86mm}{106.00mm}{14.14mm}
\pgfsetxvec{\pgfpoint{1.00mm}{0mm}}
\pgfsetyvec{\pgfpoint{0mm}{1.00mm}}
\color[rgb]{0,0,0}\pgfsetlinewidth{0.30mm}\pgfsetdash{}{0mm}
\pgfcircle[fill]{\pgfxy(0.00,0.00)}{1.00mm}
\pgfcircle[stroke]{\pgfxy(0.00,0.00)}{1.00mm}
\pgfcircle[fill]{\pgfxy(10.00,0.00)}{1.00mm}
\pgfcircle[stroke]{\pgfxy(10.00,0.00)}{1.00mm}
\pgfcircle[fill]{\pgfxy(5.00,10.00)}{1.00mm}
\pgfcircle[stroke]{\pgfxy(5.00,10.00)}{1.00mm}
\pgfcircle[fill]{\pgfxy(20.00,10.00)}{1.00mm}
\pgfcircle[stroke]{\pgfxy(20.00,10.00)}{1.00mm}
\pgfmoveto{\pgfxy(5.00,10.00)}\pgflineto{\pgfxy(0.00,0.00)}\pgfstroke
\pgfmoveto{\pgfxy(5.00,10.00)}\pgflineto{\pgfxy(10.00,0.00)}\pgfstroke
\pgfcircle[stroke]{\pgfxy(5.00,10.00)}{2.00mm}
\pgfcircle[stroke]{\pgfxy(20.00,10.00)}{2.00mm}
\pgfcircle[fill]{\pgfxy(50.00,0.00)}{1.00mm}
\pgfcircle[stroke]{\pgfxy(50.00,0.00)}{1.00mm}
\pgfcircle[fill]{\pgfxy(60.00,0.00)}{1.00mm}
\pgfcircle[stroke]{\pgfxy(60.00,0.00)}{1.00mm}
\pgfcircle[fill]{\pgfxy(55.00,10.00)}{1.00mm}
\pgfcircle[stroke]{\pgfxy(55.00,10.00)}{1.00mm}
\pgfmoveto{\pgfxy(55.00,10.00)}\pgflineto{\pgfxy(50.00,0.00)}\pgfstroke
\pgfmoveto{\pgfxy(55.00,10.00)}\pgflineto{\pgfxy(60.00,0.00)}\pgfstroke
\pgfcircle[stroke]{\pgfxy(55.00,10.00)}{2.00mm}
\pgfcircle[fill]{\pgfxy(40.00,10.00)}{1.00mm}
\pgfcircle[stroke]{\pgfxy(40.00,10.00)}{1.00mm}
\pgfcircle[stroke]{\pgfxy(40.00,10.00)}{2.00mm}
\pgfcircle[fill]{\pgfxy(90.00,0.00)}{1.00mm}
\pgfcircle[stroke]{\pgfxy(90.00,0.00)}{1.00mm}
\pgfcircle[fill]{\pgfxy(100.00,0.00)}{1.00mm}
\pgfcircle[stroke]{\pgfxy(100.00,0.00)}{1.00mm}
\pgfcircle[fill]{\pgfxy(95.00,10.00)}{1.00mm}
\pgfcircle[stroke]{\pgfxy(95.00,10.00)}{1.00mm}
\pgfmoveto{\pgfxy(95.00,10.00)}\pgflineto{\pgfxy(90.00,0.00)}\pgfstroke
\pgfmoveto{\pgfxy(95.00,10.00)}\pgflineto{\pgfxy(100.00,0.00)}\pgfstroke
\pgfcircle[stroke]{\pgfxy(95.00,10.00)}{2.00mm}
\pgfcircle[fill]{\pgfxy(80.00,10.00)}{1.00mm}
\pgfcircle[stroke]{\pgfxy(80.00,10.00)}{1.00mm}
\pgfcircle[stroke]{\pgfxy(80.00,10.00)}{2.00mm}
\pgfputat{\pgfxy(13.00,-1.00)}{\pgfbox[bottom,left]{\fontsize{11.38}{13.66}\selectfont \makebox[0pt]{$1$}}}
\pgfputat{\pgfxy(63.00,-1.00)}{\pgfbox[bottom,left]{\fontsize{11.38}{13.66}\selectfont \makebox[0pt]{$1$}}}
\pgfputat{\pgfxy(87.00,-1.00)}{\pgfbox[bottom,left]{\fontsize{11.38}{13.66}\selectfont \makebox[0pt]{$1$}}}
\pgfputat{\pgfxy(24.00,9.00)}{\pgfbox[bottom,left]{\fontsize{11.38}{13.66}\selectfont \makebox[0pt]{$2$}}}
\pgfputat{\pgfxy(44.00,9.00)}{\pgfbox[bottom,left]{\fontsize{11.38}{13.66}\selectfont \makebox[0pt]{$2$}}}
\pgfputat{\pgfxy(84.00,9.00)}{\pgfbox[bottom,left]{\fontsize{11.38}{13.66}\selectfont \makebox[0pt]{$2$}}}
\pgfputat{\pgfxy(9.00,9.00)}{\pgfbox[bottom,left]{\fontsize{11.38}{13.66}\selectfont \makebox[0pt]{$3$}}}
\pgfputat{\pgfxy(59.00,9.00)}{\pgfbox[bottom,left]{\fontsize{11.38}{13.66}\selectfont \makebox[0pt]{$3$}}}
\pgfputat{\pgfxy(99.00,9.00)}{\pgfbox[bottom,left]{\fontsize{11.38}{13.66}\selectfont \makebox[0pt]{$3$}}}
\pgfputat{\pgfxy(3.00,-1.00)}{\pgfbox[bottom,left]{\fontsize{11.38}{13.66}\selectfont \makebox[0pt]{$4$}}}
\pgfputat{\pgfxy(53.00,-1.00)}{\pgfbox[bottom,left]{\fontsize{11.38}{13.66}\selectfont \makebox[0pt]{$4$}}}
\pgfputat{\pgfxy(103.00,-1.00)}{\pgfbox[bottom,left]{\fontsize{11.38}{13.66}\selectfont \makebox[0pt]{$4$}}}
\pgfputat{\pgfxy(30.00,4.00)}{\pgfbox[bottom,left]{\fontsize{11.38}{13.66}\selectfont \makebox[0pt]{$=$}}}
\pgfputat{\pgfxy(70.00,4.00)}{\pgfbox[bottom,left]{\fontsize{11.38}{13.66}\selectfont \makebox[0pt]{$\neq$}}}
\end{pgfpicture}%
\else
  \setlength{\unitlength}{1bp}%
  \begin{picture}(308.98, 51.02)(0,0)
  \put(0,0){\includegraphics{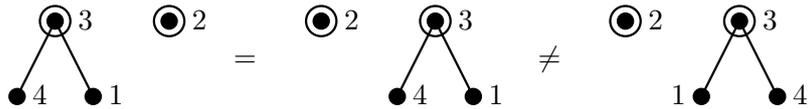}}
  \put(45.35,8.12){\fontsize{11.38}{13.66}\selectfont \makebox[0pt]{$1$}}
  \put(187.09,8.12){\fontsize{11.38}{13.66}\selectfont \makebox[0pt]{$1$}}
  \put(255.12,8.12){\fontsize{11.38}{13.66}\selectfont \makebox[0pt]{$1$}}
  \put(76.54,36.46){\fontsize{11.38}{13.66}\selectfont \makebox[0pt]{$2$}}
  \put(133.23,36.46){\fontsize{11.38}{13.66}\selectfont \makebox[0pt]{$2$}}
  \put(246.61,36.46){\fontsize{11.38}{13.66}\selectfont \makebox[0pt]{$2$}}
  \put(34.02,36.46){\fontsize{11.38}{13.66}\selectfont \makebox[0pt]{$3$}}
  \put(175.75,36.46){\fontsize{11.38}{13.66}\selectfont \makebox[0pt]{$3$}}
  \put(289.13,36.46){\fontsize{11.38}{13.66}\selectfont \makebox[0pt]{$3$}}
  \put(17.01,8.12){\fontsize{11.38}{13.66}\selectfont \makebox[0pt]{$4$}}
  \put(158.74,8.12){\fontsize{11.38}{13.66}\selectfont \makebox[0pt]{$4$}}
  \put(300.47,8.12){\fontsize{11.38}{13.66}\selectfont \makebox[0pt]{$4$}}
  \put(93.54,22.29){\fontsize{11.38}{13.66}\selectfont \makebox[0pt]{$=$}}
  \put(206.93,22.29){\fontsize{11.38}{13.66}\selectfont \makebox[0pt]{$\neq$}}
  \end{picture}%
\fi
\caption{Forests in $\F_{4,2}$}
\label{fig:forest}
\end{figure}

Define the numbers
\begin{align*}
o(n,k) &= |\Ord_{n,k}|,\\
z(n,k) &= |\Z_{n,k}|,\\
f(n,k) &= |\F_{n,k}|.
\end{align*}
We will show that an ordered tree can be ``decomposed" into an ordered tree in $\cup_{n,k}\Z_{n,k}$ and a forest in $\cup_{n,k}\F_{n,k}$. Thus it is crucial to count the numbers $z(n,k)$ and $f(n,k)$.

\begin{lem} \label{lem:znk}
The numbers $z(n,k)$ satisfy the recursion:
\begin{align}\label{eq:recurZ}
z(n,k) = n \cdot z(n-1,k) + (n+k-1)\cdot z(n-1,k-1)
\quad \text{for $1\le k <n$}
\end{align}
with the following boundary conditions:
\begin{align}
z(n,n) &= (2n-1)!! \quad \text{for $n \ge 0$} \label{eq:Z3}\\
z(n,k) &=0 \quad \text{for $n<k$ or $k<0$} \label{eq:Z2},
\end{align}
where $(2n-1)!!$ is defined by $(2n-1)=(2n-1)(2n-3)\cdots 3\cdot 1$.
\end{lem}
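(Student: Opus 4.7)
The plan is to partition $\Z_{n,k}$ according to the role of the smallest label $0$. I first observe that $0$ always lies in $\MD(T)$: since $0$ is the minimum, it cannot be an increasing leaf (whose label must exceed its parent's), so any edge incident to $0$ from above is forced to be decreasing and thus belongs to the MD. Consequently every tree in $\Z_{n,k}$ falls in exactly one of two classes, depending on whether $0$ has at least one child in $T$ or $0$ is a leaf of $T$.

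For the first term $n\cdot z(n-1,k)$, I handle trees in which $0$ has children. Every child of $0$ is then a non-MD increasing leaf; let $c$ denote the leftmost such child, so $c\in\{1,\dots,n\}$. Deleting $c$ and relabeling the remaining vertex set $[0,n]\setminus\{c\}$ onto $[0,n-1]$ by the unique order-preserving bijection yields an element of $\Z_{n-1,k}$, because $c$ is a non-MD leaf and its removal preserves the MD. The inverse, given $(T',c)\in \Z_{n-1,k}\times\{1,\dots,n\}$, shifts those labels of $T'$ that are $\ge c$ up by one and inserts $c$ as the new leftmost child of $0$; this produces a tree in $\Z_{n,k}$ in which $0$ has at least one child, and the two operations are manifestly inverse.

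For the second term $(n+k-1)\cdot z(n-1,k-1)$, I handle trees in which $0$ is a leaf. The unique edge incident to $0$ is then a decreasing MD edge, so deleting $0$ and subtracting $1$ from every remaining label yields a tree in $\Z_{n-1,k-1}$. For the inverse, given $T''\in\Z_{n-1,k-1}$, I shift all labels up by $1$ and attach $0$ as a new child of some MD vertex. The key point is that $0$ must attach to an MD vertex: attaching it to a non-MD (increasing) leaf would turn that leaf into an internal non-MD vertex and violate the $\Z$-structure. The number of admissible positions is $\sum_{v\in \MD(T'')}(c_v+1)$, where $c_v$ is the number of children of $v$ in $T''$; since every edge of $T''$ has an MD vertex as parent (non-MD vertices being leaves), this sum equals (number of edges) plus (number of MD vertices) $=(n-1)+k=n+k-1$.

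Summing the two cases yields the recursion. The boundary conditions $z(n,k)=0$ for $k<0$ or $k>n$ are immediate from the definition, and $z(n,n)=(2n-1)!!$ follows because $\Z_{n,n}$ consists of decreasing ordered trees on $[0,n]$; this count is established either by the classical ``insert the new minimum in one of $2n-1$ positions'' argument with induction, or directly by specializing the recursion to $k=n$ together with $z(n-1,n)=0$. The main delicate point is the careful bookkeeping in the leaf case, verifying that distinct attachment slots yield distinct trees and that every tree with $0$ as a leaf arises exactly once, together with the analogous verification that the leftmost-child convention in the first case uniquely recovers the pair $(T',c)$ from $T$.
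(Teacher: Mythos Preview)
Your proof is correct and follows essentially the same strategy as the paper: partition $\Z_{n,k}$ according to whether $0$ is a leaf, remove the leftmost child of $0$ in one case and the vertex $0$ itself in the other, and count the ways to reinsert. The only cosmetic difference is that in the leaf case you compute the number of admissible attachment slots directly as $\sum_{v\in\MD(T'')}(c_v+1)=(n-1)+k$, whereas the paper obtains the same $n+k-1$ by subtracting the $n-k$ forbidden slots (those at increasing leaves) from the $2n-1$ total slots in an $n$-vertex ordered tree.
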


\begin{proof}
Consider a tree $Z$ in $\Z_{n,k}$. The tree $Z$ with $n+1$ vertices consists of its maximal decreasing tree with $k+1$ vertices and the number of increasing leaves is $n-k$.
Note that the vertex $0$ is always contained in $\MD(Z)$.

If the vertex $0$ is a leaf of $Z$, consider the tree $Z'$ by deleting the leaf $0$ from $Z$.
The number of vertices in $Z'$ and $\MD(Z')$ are $n$ and $k$, respectively.
So the number of possible trees $Z'$ is $z(n-1,k-1)$.
Since we cannot attach the vertex $0$ to $n-k$ increasing leaves in recovering $Z$, there are $(2n-1)-(n-k)$ ways of recovering $Z$.
Thus the number of $Z$ with the leaf $0$ is
$$(n+k-1) \cdot z(n-1,k-1).$$

If the vertex $0$ is not a leaf of $Z$, then the vertex $0$ has at least one increasing leaf.
Let the vertex $\ell$ be the leftmost leaf of the vertex $0$ and consider the tree $Z''$ obtained by deleting the leaf $\ell$ from $Z$.
The number of vertices in $Z''$ and $\MD(Z'')$ are $n$ and $k+1$, respectively.
So the number of possible trees $Z''$ is $z(n-1,k)$.
To recover $Z$ is to relabel $Z''$ with $[0,n] \setminus \set {\ell}$ and to attach the vertex $\ell$ to the vertex $0$.
Since the number $\ell$ may be the number from $1$ to $n$,
the number of $Z$ without the leaf $0$ is
$$n \cdot z(n-1,k),$$
which completes the proof of recursion \eqref{eq:recurZ}.

Since $\Z(n,n)$ is the set of decreasing ordered trees on $[0,n]$, the equation \eqref{eq:Z3} holds \cite{Kl97} with the convention $(-1)!! =1$.
For $n<k$ or $k<0$, $\Z_{n,k}$ should be empty, so the equation \eqref{eq:Z2} also holds.
\end{proof}

\begin{lem} \label{lem:fnk}
For $0\le k\le n$, we have
\begin{align}
f(n,k) = {n \choose k} \,k\, (n+1)(n+2) \cdots (2n-k-1) \label{eq:num_f}
\end{align}
with $f(0,0) = 1.$
\end{lem}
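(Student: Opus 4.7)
The plan is to reduce the count to a classical enumeration of plane trees with prescribed root degree. First I would note that the $k$ roots of a forest in $\F_{n,k}$ are unordered while the individual trees are fully ordered, so the ordered $k$-tuples $(T_1,\dots,T_k)$ of ordered labeled trees whose vertex sets partition $[n]$ are counted exactly $k!$ times by $f(n,k)$. Setting $g(n,k)=k!\,f(n,k)$, I would then use the bijection that attaches a phantom root ``$0$'' whose $k$ ordered children are the roots of $T_1,\dots,T_k$ in order, to identify $g(n,k)$ with the number of ordered labeled trees on $\{0,1,\dots,n\}$ rooted at $0$ in which vertex $0$ has exactly $k$ children.

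Second, I would count these ``super-rooted'' trees by separating shape from labeling: such a tree is specified uniquely by an unlabeled plane tree on $n+1$ vertices whose root has degree $k$, together with an arbitrary bijection of its $n$ non-root vertices onto $[n]$. Hence $g(n,k)=n!\,p(n,k)$, where $p(n,k)$ is the number of unlabeled plane trees on $n+1$ vertices with root of degree $k$. The key combinatorial input is therefore the classical identity
\[
  p(n,k) = \frac{k}{2n-k}\binom{2n-k}{n-k}.
\]
I would derive this by noting that such a tree decomposes uniquely into a root followed by an ordered $k$-tuple of plane trees, so $p(n,k)=[x^{n-k}]\,C(x)^k$ where $C(x)=\sum_{m\ge 0}C_m x^m$ satisfies $C(x)=1+x\,C(x)^2$; the extraction then follows from Lagrange inversion, or equivalently from the Dvoretzky--Motzkin cycle lemma. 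This gives $g(n,k)=k\,(2n-k-1)!/(n-k)!$.

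Finally, dividing by $k!$ and rearranging,
\[
  f(n,k)=\frac{(2n-k-1)!}{(k-1)!\,(n-k)!}=\binom{n}{k}\,k\,\frac{(2n-k-1)!}{n!}=\binom{n}{k}\,k\,(n+1)(n+2)\cdots(2n-k-1),
\]
which matches the claimed formula; the degenerate value $f(0,0)=1$ simply records the empty forest. The main hurdle I expect is justifying the plane-tree count $p(n,k)$ cleanly, since the rest of the argument is a routine bijection followed by factorial manipulation.
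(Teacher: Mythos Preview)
Your argument is correct, but it proceeds quite differently from the paper. The paper never introduces a phantom root or appeals to the Catalan generating function; instead it fixes the root set first (the factor $\binom{n}{k}$) and then invokes the reverse Pr\"ufer algorithm of Seo--Shin to grow the forest one non-root vertex at a time, obtaining the product $k(n+1)(n+2)\cdots(2n-k-1)$ directly as the number of insertion slots available at each step. Your route---attach a root $0$, factor the resulting labeled plane tree as shape times labeling, and read off $p(n,k)=\frac{k}{2n-k}\binom{2n-k}{n-k}$ from Lagrange inversion or the cycle lemma---reaches the same closed form by reducing to a classical Catalan-type identity. The paper's approach is more self-contained and explains combinatorially why each factor in the product appears; yours has the advantage of tying $f(n,k)$ to a well-known formula and of giving the compact expression $f(n,k)=(2n-k-1)!/\bigl((k-1)!\,(n-k)!\bigr)$, which handles the boundary cases $k=n$ and $k=0$ without separate conventions. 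One small wording issue: saying the ordered $k$-tuples ``are counted exactly $k!$ times by $f(n,k)$'' is backwards---it is each \emph{unordered} forest that corresponds to $k!$ ordered tuples---though your subsequent use of $g(n,k)=k!\,f(n,k)$ as the count of ordered tuples is the correct relation.
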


\begin{proof}
Consider a forest $F$ in $\F_{n,k}$.
The forest $F$ consists of (non-ordered) $k$ ordered trees $O_1,\ldots,O_k$ with roots $r_1, r_2, \ldots, r_k$, where $r_1 < r_2 < \cdots < r_k$.
The number of ways for choosing roots $r_1, r_2, \cdots, r_k$ from $[n]$ is equal to $n \choose k$.
From the \emph{reverse Pr\"ufer algorithm (RP Algorithm)} in \cite{SS07}, the number of ways for adding $n-k$ vertices successively to $k$ roots $r_1, r_2, \cdots, r_k$ is equal to
$$
k (n+1) (n+2) \cdots (2n-k-1)
$$
for $ 0<  k < n$, thus the equation \eqref{eq:num_f} holds.
By definition, $\F(0,0)$ is the set of the empty forest. So $f(0,0) =1$.
\end{proof}

Since the number $z(n,k)$ is determined by the recurrence relation \eqref{eq:recurZ} in Lemma~\ref{lem:znk}, we can count the number $o(n,k)$ with the
following theorem.
\begin{thm} \label{thm:main}
We have
\begin{align}
o(n,k) = \sum_{k \le m \le n} \binom{n+1}{m+1} \, z(m,k) \,\frac{m-k}{n-k} (n-k)^{(n-m)} \quad \text{for\quad$0 \le k<n$,} \label{eq:num_o}
\end{align}
and
$o(n,n) = (2n-1)!!$,
where $n^{(k)}$ is a rising factorial.

\end{thm}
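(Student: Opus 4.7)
The plan is to prove the formula via a decomposition bijection
$$
\Ord_{n,k} \;\longleftrightarrow\; \bigsqcup_{m=k}^{n} \{(V, Z, F)\},
$$
in which $V\subset[0,n]$ has size $m+1$, $Z$ is an ordered tree on $V$ whose maximal decreasing subtree has vertex set $A\subset V$ of size $k+1$ and whose remaining $m-k$ vertices $R:=V\setminus A$ are all increasing leaves (equivalently, after the order-preserving relabeling $V\to[0,m]$ the tree $Z$ belongs to $\Z_{m,k}$), and $F$ is a forest on $B:=[0,n]\setminus A$ consisting of $m-k$ ordered trees whose root set is exactly $R$. The boundary case $k=n$ is immediate: $\MD(T)=T$ forces $T$ to be decreasing, so $\Ord_{n,n}=\Z_{n,n}$ and \eqref{eq:Z3} gives $o(n,n)=(2n-1)!!$; the rest of the proof handles $k<n$.

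The forward map takes $T\in\Ord_{n,k}$ to $(V,Z,F)$ as follows. Let $A$ be the vertex set of $\MD(T)$, let $R$ consist of those children (in $T$) of $A$-vertices that do not lie in $A$ -- equivalently, the roots of the increasing subtrees hanging off $\MD(T)$ -- put $V=A\cup R$, and prune each subtree rooted at $r\in R$ down to the single vertex $r$. The resulting tree $Z$ on $V$ satisfies $\MD(Z)=A$ with every $r\in R$ an increasing leaf, so $Z$ is of $\Z_{m,k}$-type on $V$ for $m=|V|-1$; the pruned-off subtrees constitute the forest $F$ on $B$ with root set $R$. The inverse grafts each tree of $F$ back at its corresponding leaf $r\in R$ of $Z$; since every grafted subtree is attached via an increasing edge, the maximal decreasing subtree of the reconstructed tree remains $A$, so it lies in $\Ord_{n,k}$.

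It remains to count. The choice of $V$ contributes $\binom{n+1}{m+1}$, and once $V$ is fixed the choice of $Z$ contributes $z(m,k)$, since the $\MD$-structure is invariant under order-preserving relabeling. For the forest part I claim that the number of forests on $B$ with a \emph{prescribed} root set $R\subset B$ depends only on the cardinalities $|B|=n-k$ and $|R|=m-k$; this is because any bijection of $B$ sending one $(m-k)$-subset to another induces a structure-preserving bijection between the corresponding sets of forests. Hence this count equals $f(n-k,m-k)/\binom{n-k}{m-k}$, which by Lemma~\ref{lem:fnk} simplifies to $(m-k)(n-k+1)(n-k+2)\cdots(2n-k-m-1)=\tfrac{m-k}{n-k}(n-k)^{(n-m)}$. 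Summing over $m$ produces the stated formula; the $m=k$ term vanishes, matching the fact that having no increasing-leaf children forces $T$ to be fully decreasing. The main hurdle I foresee is the symmetry step just described: Lemma~\ref{lem:fnk} is stated for unordered roots, so one must explicitly verify uniformity across root sets before dividing by $\binom{n-k}{m-k}$.
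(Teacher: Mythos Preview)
Your decomposition is exactly the one the paper uses: $Z$ is $\MD(T)$ together with its increasing leaves, the remaining forest on $[0,n]\setminus V(\MD(T))$ is counted via Lemma~\ref{lem:fnk}, and the division by $\binom{n-k}{m-k}$ accounts for the prescribed root set. Your explicit justification of the symmetry step and of the inverse map is, if anything, more careful than the paper's own argument, but the approach is the same.
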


\begin{proof}
Given an ordered tree $T$ in $\Ord_{n,k}$, let $Z$ be the subtree of $T$ consisting of $\MD(T)$ and its increasing edges.
If the number of vertices of $Z$ is $m+1$, then $Z$ is a subtree of $T$ with $(m-k)$ increasing leaves.
Also, the induced subgraph $Y$ of $T$ generated by the $(n-k)$ vertices not belonging to $\MD(T)$ is a (non-ordered) forest consisting of $(m-k)$ ordered trees whose roots are only increasing leaves of $Z$.

Now let us count the number of ordered trees $T \in \Ord_{n,k}$ with $\abs{V(Z)}=m+1$ where $V(Z)$ is the set of vertices in $Z$.
First of all, the number of ways for selecting a set $V(Z) \subset [0,n]$  is equal to $n+1 \choose m+1$.
By attaching $(m-k)$ increasing leaves to a decreasing tree with $k$ edges, we can make an ordered trees on $V(Z)$.
There are exactly $z(m,k)$ ways for making such an ordered subtree on $V(Z)$.
By the definition of $\F_{n,k}$ and Lemma~\ref{lem:fnk}, the number of ways for constructing the other parts on $V(T)\setminus V(Z)$ is equal to
$$\left. f(n-k, m-k) \middle/  {n-k \choose m-k} \right. =  \frac{m-k}{n-k} (n-k)^{(n-m)}.$$
Since the range of $m$ is $k\le m \le n$, the equation \eqref{eq:num_o} holds.

Finally, $\Ord(n,n)$ is the set of decreasing ordered trees on $[0,n]$, so
$$o(n,n) = z(n,n)=(2n-1)!!$$
holds for $n \ge 0$.
\end{proof}

\section{Remark}
Due to Theorem~\ref{thm:main}, we can calculate $o(n,k)$ for all $n$, $k$.
However a closed form, a recurrence relation, or a generating function of $o(n,k)$ have not been found yet.
The following might be a direction for solving the problem:

Shor \cite{Sho95} showed that the number $r(n,k)$, which is the number of rooted trees on $[n]$ with $k$ improper edges, satisfies
$$r(n,k) = (n-1)\, r(n-1,k) + (n+k-2)\, r(n-1,k-1),$$
where an edge $(u,v)$ is called \emph{improper} if $u$ is the endpoint closer to root and $u$ has a larger label than some descendant of $v$.
Zeng \cite{CWY11, Zen99} found that the generating function for $\left\{r(n,k)\right\}_{k=0}^n$ is the Ramanujan polynomial $R_n(x)$, which is defined by
$$R_{n+1} (x) = n (1+x) R_n(x) + x^2 R'_n (x); \quad R_1(x) = 1.$$
Drake \cite[p.\,46]{Dra08} observed that $z(n,k) = r(n+1,k)$ for all $k\le n$, by using the generating function method.
Actually, $z(n,k)$ and $r(n+1,k)$ satisfy the same recursion and initial conditions, so we are able to construct a recursive bijection between these two objects. With this point of view, it would be interesting to find a certain set of rooted trees of cardinality $o(n,k)$.

\section*{Acknowledgment}

This research was supported by Basic Science Research Program through the National Research Foundation of Korea (NRF) funded by the Ministry of Education, Science and Technology (2011-0008683, 2012R1A1A1014154).



\end{document}